\documentclass[11pt]{article}
\usepackage{amsmath}
\usepackage{amssymb}
\usepackage{amsthm}
\usepackage[usenames]{color}
\usepackage{amscd}
\usepackage{dsfont}
\usepackage{indentfirst}

\usepackage[colorlinks=true,linkcolor=blue,filecolor=red,
citecolor=webgreen]{hyperref}
\definecolor{webgreen}{rgb}{0,.5,0}

\numberwithin{equation}{section}

\hoffset=-.7truein \voffset=-.6truein \textwidth=160mm
\textheight=210mm

\def\C{{\mathds{C}}}

\def\N{{\mathds{N}}}
\def\Z{{\mathds{Z}}}

\def\1{{\bf 1}}

\newcommand{\DOT}{\text{\rm\Huge{.}}}

\newtheorem{theorem}{Theorem}[section]

\newtheorem{lemma}[theorem]{Lemma}
\newtheorem{cor}[theorem]{Corollary}

\begin{document}

\title{{\bf Menon-type identities concerning Dirichlet characters}}
\author{L\'aszl\'o T\'oth \\ \\ Department of Mathematics, University of P\'ecs \\
Ifj\'us\'ag \'utja 6, 7624 P\'ecs, Hungary \\ E-mail: {\tt ltoth@gamma.ttk.pte.hu}}
\date{}
\maketitle

\centerline{Int. J. Number Theory {\bf 14}, No. 4 (2018), 1047-1054}

\begin{abstract} Let $\chi$ be a Dirichlet character (mod $n$) with conductor $d$.
In a quite recent paper Zhao and Cao deduced the identity
$\sum_{k=1}^n (k-1,n) \chi(k)= \varphi(n)\tau(n/d)$, which reduces
to Menon's identity if $\chi$ is the principal character (mod $n$).
We generalize the above identity by considering even functions (mod
$n$), and offer an alternative approach to proof. We also obtain
certain related formulas concerning Ramanujan sums.
\end{abstract}

{\sl 2010 Mathematics Subject Classification}: 11A07, 11A25

{\sl Key Words and Phrases}: Menon's identity, Dirichlet character, primitive character, arithmetic function, even function (mod $n$),
Euler's totient function, Ramanujan sum, congruence

\section{Introduction}

In a quite recent paper Zhao and Cao \cite{ZhaCao} derived the following identity. Let $\chi$ be a Dirichlet character (mod $n$)
with conductor $d$ ($d\mid n$). Then
\begin{equation} \label{Menon_id_char}
\sum_{k=1}^n (k-1,n) \chi(k)= \varphi(n)\tau(n/d) \quad (n\in \N),
\end{equation}
where $(k-1,n)$ stands for the greatest common divisor of $k-1$ and $n$, $\varphi(n)$ is Euler's totient function and $\tau(n)=\sum_{d\mid n} 1$ is
the divisor function. If $\chi$ is the principal character (mod $n$), that is $d=1$, then \eqref{Menon_id_char} reduces to
Menon's identity
\begin{equation} \label{Menon_id}
\sum_{\substack{k=1\\ (k,n)=1}}^n (k-1,n) = \varphi(n)\tau(n) \quad (n\in \N).
\end{equation}

On the other hand, if $\chi$ is a primitive Dirichlet character (mod $n$), then \eqref{Menon_id_char} gives (the case $d=n$)
\begin{equation} \label{Menon_id_primit_char}
\sum_{k=1}^n (k-1,n) \chi(k)= \varphi(n) \quad (n\in \N).
\end{equation}

In fact, Zhao and Cao \cite{ZhaCao} first proved formula \eqref{Menon_id_primit_char} and then deduced identity \eqref{Menon_id_char} by
using the fact that every Dirichlet character is induced by a primitive character (Lemma \ref{Lemma_char}). They showed
that the left hand sides of \eqref{Menon_id_char} and \eqref{Menon_id_primit_char} are multiplicative in $n$ and computed their values
for prime powers.

It is the goal of the present paper to generalize these identities
by considering even functions (mod $n$), and to offer an alternative
approach to proof, based on direct manipulations of the
corresponding sums, valid for any integer $n\in \N$.

A function $f:\Z \to \C$ is called an even function (mod $n$) if
$f((k,n))=f(k)$ holds for any $k\in \Z$, where $n\in \N$ is fixed.
The term $n$-even function is also used in the literature. Examples
of even functions (mod $n$) are $f(k)=(k,n)$, more generally
$f(k)=F((k,n))$, where $F$ is an arbitrary arithmetic function;
$f(k)=c_n(k)$, representing the Ramanujan sum; the function $N(k)$,
counting the solutions $(x_1,\ldots,x_q)\in \Z_n^q$ of the
congruence $x_1+\cdots +x_q\equiv k$ (mod $n$) such that
$(x_1,n)=\cdots =(x_q,n)=1$, with $q\in \N$ fixed. General accounts
of even functions (mod $n$) can be found, e.g., in the books by
McCarthy \cite{McC1986}, Schwarz and Spilker \cite{SchSpi1994}, and
the paper by the author and Haukkanen \cite{TotHauk2011}.

Different Menon-type identities were established by several authors. See, e.g., the papers by Haukkanen \cite{Hauk2005}, Li and Kim,
\cite{LiKim2017, LiKim}, Miguel \cite{Mig2016}, Sita Ramaiah \cite{Sit1978}, T\u{a}rn\u{a}uceanu \cite{Tar2012}, the author
\cite{Tot2011, Tot2013}.

\section{Main results}

We prove the following results. The first one is a direct generalization of Menon's identity, not involving characters, which will be
used later in the proof. Let $\mu$ denote, as usual, the M\"obius function and let $*$ denote the Dirichlet convolution of arithmetic functions.

\begin{theorem} \label{Theorem_Menon_direct} Let $n,d\in \N$, $r,s\in \Z$ such that $d\mid n$. Let $f$ be an even function (mod $n$). Then
\begin{equation} \label{Th_gen_Menon}
\sum_{\substack{k=1\\ (k,n)=1\\ k\equiv r \text{\rm (mod $d$)} }}^n f(k-s) =
\begin{cases} \displaystyle \frac{\varphi(n)}{\varphi(d)} \sum_{\substack{e\mid n\\ (e,s)=1 \\ (e,d)\mid r-s}}
\frac{(\mu*f)(e)}{\varphi(e)} \varphi((e,d)),
& \text{ if $(r,d)=1$}, \\ 0, & \text{ if $(r,d)>1$}.
\end{cases}
\end{equation}
\end{theorem}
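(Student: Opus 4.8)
The plan is to reduce the constrained sum to a counting problem by expanding the even function $f$ through its divisor (Möbius) representation, and then to evaluate the resulting count by the Chinese Remainder Theorem. I would begin by disposing of the degenerate case: if $(r,d)>1$, pick a prime $p\mid(r,d)$; then every $k$ with $k\equiv r\pmod d$ satisfies $p\mid k$, which is incompatible with $(k,n)=1$ since $p\mid d\mid n$. Hence the summation set is empty and the sum vanishes, which is the second branch. So from now on assume $(r,d)=1$.

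Since $f$ is even (mod $n$), its restriction to the divisors of $n$ is recovered by Möbius inversion over the divisor lattice: with $g=\mu*f$ one has $\mathbf{1}*g=f$ on divisors of $n$, so that $f(m)=f((m,n))=\sum_{e\mid(m,n)}g(e)$ for every $m$. Applying this with $m=k-s$ gives $f(k-s)=\sum_{e\mid n,\ e\mid k-s}(\mu*f)(e)$. Substituting this into the left-hand side and interchanging the order of summation, I obtain $\sum_{e\mid n}(\mu*f)(e)\,N(e)$, where $N(e)$ denotes the number of $k\in\{1,\dots,n\}$ with $(k,n)=1$, $k\equiv r\pmod d$ and $k\equiv s\pmod e$.

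The heart of the argument, and the step I expect to be the main obstacle, is the evaluation of $N(e)$. The two congruences $k\equiv r\pmod d$ and $k\equiv s\pmod e$ are simultaneously solvable exactly when $(e,d)\mid r-s$, in which case they combine into a single congruence modulo $L=\lcm(d,e)$, and $L\mid n$ because both $d\mid n$ and $e\mid n$. For such a solution to be coprime to $n$ the combined residue must be a unit modulo $L$; given $(r,d)=1$, this is equivalent to $(e,s)=1$. Thus $N(e)=0$ unless both $(e,d)\mid r-s$ and $(e,s)=1$ hold, which are precisely the restrictions appearing in the stated sum. When they do hold, $N(e)$ counts the residues mod $n$ lying in a fixed class that is coprime to $L$ modulo $L$; a prime-by-prime CRT computation, splitting the primes dividing $n$ into those dividing $L$ and the rest, yields $N(e)=\varphi(n)/\varphi(L)$.

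Finally I would rewrite this using the totient identity $\varphi(\lcm(d,e))\,\varphi((d,e))=\varphi(d)\,\varphi(e)$, which gives $N(e)=\frac{\varphi(n)}{\varphi(d)}\cdot\frac{\varphi((e,d))}{\varphi(e)}$. Inserting this into $\sum_{e\mid n}(\mu*f)(e)N(e)$ and retaining only the surviving terms (those with $(e,s)=1$ and $(e,d)\mid r-s$) produces exactly the first branch of the claimed formula. As a safeguard against normalization errors I would check the special case $f(k)=(k,n)$, for which $(\mu*f)(e)=\varphi(e)$, and $d=1$, $s=1$, recovering Menon's identity $\varphi(n)\tau(n)$.
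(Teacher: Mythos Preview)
Your argument is correct, and it reaches the same conclusion by a somewhat different route from the paper. Both proofs open identically: dispose of the case $(r,d)>1$, then expand $f(k-s)=\sum_{e\mid(k-s,n)}(\mu*f)(e)$ and interchange summations. The divergence lies in how the constraint $(k,n)=1$ is handled. You keep it inside the count $N(e)$ and evaluate $N(e)$ in one stroke by combining the two congruences modulo $L=[d,e]$ and invoking the standard fact that the reduction $(\Z/n\Z)^{*}\to(\Z/L\Z)^{*}$ has fibres of uniform size $\varphi(n)/\varphi(L)$, followed by the identity $\varphi([d,e])\,\varphi((d,e))=\varphi(d)\,\varphi(e)$. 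The paper instead dissolves the coprimality condition by a second M\"obius expansion $\sum_{\delta\mid(k,n)}\mu(\delta)$, substitutes $k=\delta j$, and is left with a double sum over $\delta\mid n$ and $e\mid n$ in which the inner count involves only the plain linear congruences $\delta j\equiv r\pmod d$ and $\delta j\equiv s\pmod e$; the $\delta$-sum is then evaluated as an explicit Euler product. Your route is shorter and structurally cleaner, at the price of quoting the unit-counting lemma; the paper's route is entirely self-contained, reducing everything to elementary congruence counts. One small point you should make explicit when writing this up: the equivalence between ``the combined residue is a unit mod $L$'' and ``$(e,s)=1$'' uses not only $(r,d)=1$ but also the already-imposed condition $(e,d)\mid r-s$, since a prime $p\mid(e,d)$ with $p\mid s$ would then force $p\mid r$, contradicting $(r,d)=1$.
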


If $f(k)=F((k,n)$, where $F$ is an arbitrary arithmetic function, $d=1$ and $(s,n)=1$, then from \eqref{Th_gen_Menon} we reobtain the identity
due to Sita Ramaiah \cite[Th.\ 9.1]{Sit1978} in the more general setting of regular arithmetic convolutions.

\begin{cor} \label{Cor_gen_Menon}  Let $n,d\in \N$, $r,s\in \Z$ such that $d\mid n$. Then
\begin{equation} \label{form_gen_Menon}
\sum_{\substack{k=1\\ (k,n)=1\\ k\equiv r \text{\rm (mod $d$)} }}^n (k-s,n) =
\begin{cases} \displaystyle \frac{\varphi(n)}{\varphi(d)} \sum_{\substack{e\mid n \\ (e,s)=1 \\ (e,d)\mid r-s}} \varphi((e,d)),
& \text{ if $(r,d)=1$}, \\ 0, & \text{ if $(r,d)>1$}.
\end{cases}
\end{equation}
\end{cor}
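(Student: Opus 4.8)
The plan is to derive the corollary directly from Theorem~\ref{Theorem_Menon_direct} by specializing the even function $f$. First I would take $f(k)=(k,n)$ and verify that it satisfies the hypothesis of the theorem, namely that it is an even function (mod $n$): since $((k,n),n)=(k,n)$, we have $f((k,n))=f(k)$ for every $k\in\Z$, so $f$ is indeed $n$-even and may be substituted into \eqref{Th_gen_Menon}.

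The only quantity that then requires evaluation is the coefficient $(\mu*f)(e)/\varphi(e)$ appearing inside the sum. The key observation is that the convolution $\mu*f$ samples $f$ only at positive integers $b$ dividing $e$, and every such $b$ divides $n$; for any $b\mid n$ we have $f(b)=(b,n)=b$, so $f$ coincides with the identity function $\id$ on the divisors of $n$. Consequently $(\mu*f)(e)=(\mu*\id)(e)=\varphi(e)$ by the classical identity $\varphi=\mu*\id$, whence $(\mu*f)(e)/\varphi(e)=1$ for every $e\mid n$.

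Substituting this constant value into \eqref{Th_gen_Menon} collapses the weighted sum to $\sum_e \varphi((e,d))$ over the same index set $\{e\mid n:\ (e,s)=1,\ (e,d)\mid r-s\}$, which is precisely the right-hand side of \eqref{form_gen_Menon}; the degenerate case $(r,d)>1$, giving the value $0$, carries over verbatim from the theorem. The derivation is entirely mechanical once Theorem~\ref{Theorem_Menon_direct} is available, so I do not expect a substantive obstacle. The single point demanding care is the recognition that $\mu*f$ only ever evaluates $f$ at divisors of $n$, where the $n$-even function $(k,n)$ agrees with $\id$, which is what legitimizes replacing $\mu*f$ by $\varphi$.
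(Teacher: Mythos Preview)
Your proposal is correct and follows essentially the same route as the paper: apply Theorem~\ref{Theorem_Menon_direct} with $f(k)=(k,n)$ and observe that for $e\mid n$ one has $(\mu*f)(e)=\sum_{ab=e}\mu(a)(b,n)=\sum_{ab=e}\mu(a)b=\varphi(e)$, which reduces the summand to $\varphi((e,d))$. Your added remarks (checking $n$-evenness and explaining why $f$ agrees with $\id$ on divisors of $n$) are fine clarifications but not substantive departures.
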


If $d=1$ and $s=1$, then \eqref{form_gen_Menon} reduces to Menon's identity \eqref{Menon_id}.

\begin{cor} \label{Cor_gen_Ramanujan}  Let $n,d\in \N$, $r,s\in \Z$ such that $d\mid n$. Then
\begin{equation} \label{form_gen_Menon_Ramanujan}
\sum_{\substack{k=1\\ (k,n)=1\\ k\equiv r \text{\rm (mod $d$)} }}^n c_n(k-s) =
\begin{cases} \displaystyle \frac{\varphi(n)}{\varphi(d)} \sum_{\substack{e\mid n \\ (e,s)=1 \\ (e,d)\mid r-s}}
\frac{e\mu(n/e)}{\varphi(e)} \varphi((e,d)),
& \text{ if $(r,d)=1$}, \\ 0, & \text{ if $(r,d)>1$}.
\end{cases}
\end{equation}
\end{cor}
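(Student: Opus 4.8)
The plan is to apply Theorem \ref{Theorem_Menon_direct} directly with $f=c_n$, the Ramanujan sum. First I would recall that $c_n$ is an even function (mod $n$): from the standard representation $c_n(k)=\sum_{e\mid (k,n)} e\,\mu(n/e)$ one sees that $c_n(k)$ depends only on $(k,n)$, hence $c_n((k,n))=c_n(k)$ and the hypothesis of Theorem \ref{Theorem_Menon_direct} is satisfied. Substituting $f=c_n$ into \eqref{Th_gen_Menon} immediately reduces the problem to identifying the inner weight $\frac{(\mu*c_n)(e)}{\varphi(e)}$, where $\mu*c_n$ denotes the Dirichlet convolution of $\mu$ with the arithmetic function $e\mapsto c_n(e)$ evaluated at the divisors of $n$ (for $e\mid n$ one has $(e,n)=e$, so this value coincides with the value of the even function).

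The single genuine computation is to show that $(\mu*c_n)(e)=e\,\mu(n/e)$ for every $e\mid n$. To this end I would write $c_n(d)=\sum_{\delta\mid d}\delta\,\mu(n/\delta)$, which is valid for $d\mid n$ since then $(d,n)=d$; thus, setting $g(\delta):=\delta\,\mu(n/\delta)$, the function $c_n$ equals the convolution $\1*g$ evaluated at divisors of $n$. Convolving with $\mu$ and using $\mu*\1=\varepsilon$, the Dirichlet identity, then collapses the iterated sum to $(\mu*c_n)(e)=g(e)=e\,\mu(n/e)$. Alternatively, one verifies this directly by interchanging the order of summation in $\sum_{d\mid e}\mu(e/d)\sum_{\delta\mid d}\delta\,\mu(n/\delta)$ and using that the inner sum $\sum_{\delta\mid d\mid e}\mu(e/d)$ equals $1$ if $e=\delta$ and $0$ otherwise.

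Inserting $\frac{(\mu*c_n)(e)}{\varphi(e)}=\frac{e\,\mu(n/e)}{\varphi(e)}$ into \eqref{Th_gen_Menon} yields exactly the right-hand side of \eqref{form_gen_Menon_Ramanujan}, while the vanishing in the case $(r,d)>1$ is inherited verbatim from Theorem \ref{Theorem_Menon_direct}. I do not expect any real obstacle: the statement is a specialization of Theorem \ref{Theorem_Menon_direct}, and the only nonroutine point is the convolution identity $(\mu*c_n)(e)=e\,\mu(n/e)$, which is precisely the analogue of the identity $(\mu*\id)(e)=\varphi(e)$ used to pass from Theorem \ref{Theorem_Menon_direct} to Corollary \ref{Cor_gen_Menon}.
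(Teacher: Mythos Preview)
Your proposal is correct and follows essentially the same route as the paper: apply Theorem~\ref{Theorem_Menon_direct} with $f=c_n$, note from the representation $c_n(k)=\sum_{e\mid (k,n)} e\,\mu(n/e)$ that $c_n$ is even (mod $n$), and deduce $(\mu*c_n)(e)=e\,\mu(n/e)$ for $e\mid n$. Your write-up in fact spells out the convolution computation in more detail than the paper does.
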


If $d=1$, then \eqref{form_gen_Menon_Ramanujan} gives the first identity of the known formulas
\begin{equation*}
\sum_{\substack{k=1\\ (k,n)=1}}^n c_n(k-s) = \varphi(n) \sum_{\substack{e\mid n \\ (e,s)=1}}
\frac{e\mu(n/e)}{\varphi(e)} = \mu(n)c_n(s),
\end{equation*}
the second one being the Brauer-Rademacher identity. See \cite[Ch.\ 2]{McC1986}.

\begin{theorem} \label{Theorem_main_char}
Let $\chi$ be a Dirichlet character {\rm (}mod $n${\rm )} with conductor $d$ {\rm (}$n,d\in \N$, $d\mid n${\rm )}. Let $f$ be an
even function (mod $n$) and  let $s\in \Z$. Then
\begin{equation*}
\sum_{k=1}^n f(k-s) \chi(k) = \varphi(n) \chi^*(s)  \sum_{\substack{\delta \mid n/d\\ (\delta,s)=1}}
\frac{(\mu*f)(\delta d)}{\varphi(\delta d)},
\end{equation*}
where $\chi^*$ is the primitive character (mod $d$) that induces $\chi$.
\end{theorem}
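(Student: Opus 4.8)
The plan is to reduce this character sum to the character-free identity of Theorem \ref{Theorem_Menon_direct} and then to extract the arithmetic of the inducing primitive character. Since $\chi$ is induced by the primitive character $\chi^*$ (mod $d$), we have $\chi(k)=\chi^*(k)$ whenever $(k,n)=1$ and $\chi(k)=0$ otherwise (Lemma \ref{Lemma_char}); moreover $\chi^*(k)$ depends only on $k$ modulo $d$. First I would therefore restrict the left-hand side to the terms with $(k,n)=1$, replace $\chi$ by $\chi^*$, and partition these $k$ according to their residue $r$ modulo $d$, keeping only the classes with $(r,d)=1$ (the rest contribute nothing, since then $\chi^*(r)=0$). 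This rewrites the sum as $\sum_{(r,d)=1}\chi^*(r)\,T_r$, where $T_r=\sum_{k\equiv r\,(d),\,(k,n)=1} f(k-s)$ is exactly the sum evaluated in Theorem \ref{Theorem_Menon_direct}.

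Substituting the value of $T_r$ from \eqref{Th_gen_Menon} (we are in the case $(r,d)=1$) and interchanging the order of summation, the expression becomes
\[
\frac{\varphi(n)}{\varphi(d)} \sum_{\substack{e\mid n\\ (e,s)=1}} \frac{(\mu*f)(e)}{\varphi(e)}\,\varphi((e,d))\, S(e),
\qquad
S(e):=\sum_{\substack{r \bmod d,\ (r,d)=1\\ (e,d)\,\mid\, r-s}} \chi^*(r),
\]
since the condition $(e,d)\mid r-s$ is precisely the statement that $r$ lies in a fixed progression modulo $(e,d)$. The whole problem now reduces to evaluating the inner character sum $S(e)$ over an arithmetic progression modulo the divisor $g:=(e,d)$ of $d$.

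This evaluation is the heart of the argument, and is where primitivity of $\chi^*$ enters. I claim $S(e)=0$ unless $g=d$, in which case the progression contains the single class $r\equiv s$ and $S(e)=\chi^*(s)$. For a proper divisor $g\mid d$ with $g<d$, I would invoke the standard characterization that a primitive character modulo $d$ admits no smaller induced modulus: there exists $c$ with $(c,d)=1$, $c\equiv 1\pmod{g}$ and $\chi^*(c)\neq 1$. Multiplication by $c$ permutes the residues $r$ appearing in $S(e)$, as it preserves both $(r,d)=1$ and the congruence $r\equiv s\pmod{g}$; hence $S(e)=\chi^*(c)\,S(e)$ and therefore $S(e)=0$. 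The main obstacle is precisely producing this $c$, i.e.\ citing the correct primitivity criterion; once it is in hand the vanishing is a one-line orbit argument.

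Finally, only the divisors $e$ with $d\mid e$ survive, and for these $(e,d)=d$, so that $\varphi((e,d))=\varphi(d)$ cancels the factor $\varphi(d)^{-1}$. Writing $e=\delta d$ with $\delta\mid n/d$ turns the sum into $\varphi(n)\chi^*(s)\sum_{\delta\mid n/d,\ (\delta d,s)=1}(\mu*f)(\delta d)/\varphi(\delta d)$. To match the stated form I would replace the condition $(\delta d,s)=1$ by $(\delta,s)=1$: when $(s,d)=1$ the two are equivalent, while when $(s,d)>1$ one has $\chi^*(s)=0$, so both expressions vanish. This yields the claimed identity.
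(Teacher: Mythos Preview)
Your argument is correct and follows the paper's proof essentially line by line: restrict to $(k,n)=1$ via Lemma~\ref{Lemma_char}, split according to residues modulo $d$, apply Theorem~\ref{Theorem_Menon_direct}, swap the order of summation, and kill all terms with $(e,d)<d$ using primitivity of $\chi^*$. The only cosmetic differences are that you reprove Lemma~\ref{Lemma_char_sum} inline rather than citing it, and that you spell out the passage from the condition $(\delta d,s)=1$ to $(\delta,s)=1$ (via $\chi^*(s)=0$ when $(s,d)>1$), which the paper handles with a single closing remark.
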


\begin{cor} \label{Cor_main_char_Menon}
Let $\chi$ be a Dirichlet character (mod $n$) with conductor $d$ ($n,d\in \N$, $d\mid n$) and let $s\in \Z$.
Then
\begin{equation} \label{id_Menon_char_s}
\sum_{k=1}^n (k-s,n) \chi(k) = \varphi(n) \chi^*(s)  \sum_{\substack{\delta \mid n/d\\ (\delta,s)=1}} 1.
\end{equation}
\end{cor}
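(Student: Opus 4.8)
The plan is to obtain \eqref{id_Menon_char_s} as a direct specialization of Theorem \ref{Theorem_main_char}. First I would take $f(k)=(k,n)$, which is an even function (mod $n$) by the basic example recorded in the Introduction; with this choice the left-hand side of the theorem becomes precisely $\sum_{k=1}^n (k-s,n)\chi(k)$, and the prefactor $\varphi(n)\chi^*(s)$ is already in the required form. It then remains only to evaluate the coefficients $(\mu*f)(\delta d)/\varphi(\delta d)$ that appear in the theorem's right-hand side.

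The key observation is that, for each $\delta\mid n/d$, the value $(\mu*f)(\delta d)=\sum_{e\mid \delta d}\mu(\delta d/e)f(e)$ involves only the numbers $f(e)$ at divisors $e$ of $\delta d$. Since $\delta\mid n/d$ forces $\delta d\mid n$, every such $e$ divides $n$, and hence $f(e)=(e,n)=e$. Thus on the set of divisors of $\delta d$ the function $f$ coincides with the identity function $\id$, so that $(\mu*f)(\delta d)=(\mu*\id)(\delta d)$.

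The one genuine computation is the classical Möbius-inversion identity $\mu*\id=\varphi$, equivalent to $\sum_{e\mid m}\varphi(e)=m$. This yields $(\mu*f)(\delta d)=\varphi(\delta d)$, so each coefficient $(\mu*f)(\delta d)/\varphi(\delta d)$ equals $1$. Substituting into the formula of Theorem \ref{Theorem_main_char} collapses the weighted sum into the plain counting sum $\sum_{\delta\mid n/d,\,(\delta,s)=1}1$, which is exactly \eqref{id_Menon_char_s}. I do not expect any real obstacle beyond verifying that $f$ and $\id$ agree at all divisors entering the convolution—a point that hinges solely on $\delta d\mid n$—after which the corollary follows at once from the theorem already proved.
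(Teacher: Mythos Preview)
Your proposal is correct and matches the paper's own proof: apply Theorem~\ref{Theorem_main_char} with $f(k)=(k,n)$ and use that $(\mu*f)(e)=\varphi(e)$ for every $e\mid n$, since $f$ agrees with $\id$ on divisors of $n$. The only difference is that you spell out the justification of $(\mu*f)(e)=\varphi(e)$ in slightly more detail than the paper does.
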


If $s=1$, then \eqref{id_Menon_char_s} reduces to the identity \eqref{Menon_id_char} of Zhao and Cao \cite{ZhaCao}.
If $\chi$ is the principal character (mod $n$), that is $d=1$, then \eqref{id_Menon_char_s} gives
\begin{equation} \label{id_Menon_s}
\sum_{\substack{k=1\\ (k,n)=1}}^n (k-s,n)  = \varphi(n) \sum_{\substack{\delta \mid n\\ (\delta,s)=1}} 1,
\end{equation}
valid for any $s\in \Z$. If $(s,n)=1$, then the right hand side of \eqref{id_Menon_s} is $\varphi(n)\tau(n)$, like in
Menon's classical identity \eqref{Menon_id}.

\begin{cor} \label{Cor_main_char_Ramanujan}
Let $\chi$ be a Dirichlet character (mod $n$) with conductor $d$ ($n,d\in \N$, $d\mid n$) and let $s\in \Z$.
Then
\begin{equation*}
\sum_{k=1}^n c_n(k-s) \chi(k) = d \varphi(n) \chi^*(s)  \sum_{\substack{\delta \mid n/d\\ (\delta,s)=1}}
\frac{\delta \mu(n/(\delta d))}{\varphi(\delta d)}.
\end{equation*}
\end{cor}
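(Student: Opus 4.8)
The plan is to specialize Theorem \ref{Theorem_main_char} to the even function $f(k)=c_n(k)$, so that the corollary reduces entirely to evaluating the Dirichlet convolution $(\mu*c_n)(\delta d)$ that appears on the right hand side. Here $c_n$ is regarded, as usual, as an even function (mod $n$), identified with the arithmetic function $e\mapsto c_n(e)$ on the divisors $e\mid n$; note that for such $e$ one has $(e,n)=e$, so all the convolution arguments $b\mid m\mid n$ occurring below satisfy $(b,n)=b$.

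The key computation is the following. Starting from the classical representation $c_n(k)=\sum_{t\mid (k,n)} t\,\mu(n/t)$, for every divisor $e\mid n$ we obtain $c_n(e)=\sum_{t\mid e} t\,\mu(n/t)$. Writing $g(t)=t\,\mu(n/t)$, this says precisely that $c_n=\mathbf{1}*g$ as a Dirichlet convolution (with $\mathbf{1}$ the constant-$1$ function), since $(\mathbf{1}*g)(e)=\sum_{t\mid e} g(t)$. Convolving with $\mu$ and using $\mu*\mathbf{1}=\varepsilon$, the identity for Dirichlet convolution, then yields $\mu*c_n=\mu*\mathbf{1}*g=g$, that is,
\[
(\mu*c_n)(m)=m\,\mu(n/m) \qquad (m\mid n).
\]
In particular $(\mu*c_n)(\delta d)=\delta d\,\mu\!\left(n/(\delta d)\right)$ for every $\delta\mid n/d$.

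Substituting this value into the formula of Theorem \ref{Theorem_main_char} gives
\[
\sum_{k=1}^n c_n(k-s)\,\chi(k)=\varphi(n)\,\chi^*(s)\sum_{\substack{\delta\mid n/d\\ (\delta,s)=1}} \frac{\delta d\,\mu(n/(\delta d))}{\varphi(\delta d)},
\]
and pulling the constant factor $d$ out of the sum produces the stated identity. Since this is a direct specialization of an already proved theorem, there is no substantial obstacle; the only point requiring care is the bookkeeping around the identification of the even function $c_n$ with an arithmetic function on the divisors of $n$, so that the convolution $\mu*c_n$ in Theorem \ref{Theorem_main_char} is taken in this sense and the representation $c_n(b)=\sum_{t\mid b} t\,\mu(n/t)$ is legitimately applied to each $b\mid m\mid n$.
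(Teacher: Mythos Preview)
Your proof is correct and follows exactly the paper's approach: apply Theorem~\ref{Theorem_main_char} with $f(k)=c_n(k)$ and use the identity $(\mu*c_n)(m)=m\,\mu(n/m)$ for $m\mid n$, which the paper derives in the proof of Corollary~\ref{Cor_gen_Ramanujan} and simply cites here. Your write-up just spells out that convolution computation in slightly more detail (via $c_n=\mathbf{1}*g$ with $g(t)=t\,\mu(n/t)$), but the substance is identical.
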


We remark that the sums in Theorem \ref{Theorem_main_char} and Corollaries \ref{Cor_main_char_Menon} and \ref{Cor_main_char_Ramanujan}
vanish provided that $(s,d)>1$.

\begin{theorem} \label{Th_prim_char} Let $\chi$ be a primitive Dirichlet character (mod $n$), where $n\in \N$. Let $f$ be an
even function (mod $n$) and  let $s\in \Z$. Then
\begin{equation*}
\sum_{k=1}^n f(k-s) \chi(k) = (\mu*f)(n) \chi(s).
\end{equation*}
\end{theorem}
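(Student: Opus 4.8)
The plan is to reduce the character sum to the structural behaviour of a primitive character on arithmetic progressions. First I would expand $f$ by its standard divisor representation: since $f$ is even (mod $n$), Möbius inversion on the divisor lattice of $n$ gives $f(k)=f((k,n))=\sum_{e\mid(k,n)}(\mu*f)(e)$, where $(\mu*f)(e)=\sum_{a\mid e}\mu(e/a)f(a)$. Writing the condition $e\mid(k,n)$ as ``$e\mid n$ and $e\mid k$'', I would substitute $f(k-s)=\sum_{\substack{e\mid n\\ e\mid k-s}}(\mu*f)(e)$ into the sum and interchange the order of summation, obtaining
\[
\sum_{k=1}^n f(k-s)\chi(k)=\sum_{e\mid n}(\mu*f)(e)\,S(e),\qquad S(e):=\sum_{\substack{k=1\\ k\equiv s\ (\mathrm{mod}\ e)}}^n\chi(k).
\]
Thus the whole problem collapses to evaluating the inner progression sums $S(e)$ for each divisor $e\mid n$.

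Second, I would evaluate $S(e)$. For $e=n$ the congruence $k\equiv s\pmod n$ has a single representative in $\{1,\dots,n\}$, and by $n$-periodicity $S(n)=\chi(s)$. The crux is to show that $S(e)=0$ for every proper divisor $e<n$. Here I would invoke primitivity in the following concrete form: since the conductor of $\chi$ equals $n$, for each proper divisor $e\mid n$ the character $\chi$ is not identically $1$ on the subgroup $\{c\bmod n:\ (c,n)=1,\ c\equiv 1\ (\mathrm{mod}\ e)\}$, so there exists $c$ with $(c,n)=1$, $c\equiv 1\pmod e$ and $\chi(c)\neq 1$. Multiplication by such a $c$ is a bijection of $\Z/n\Z$ fixing the residue class $s\pmod e$ (because $c\equiv 1\pmod e$ forces $cs\equiv s\pmod e$), hence it permutes the terms of $S(e)$; the substitution $k\mapsto ck$ then shows $S(e)=\sum_{k}\chi(ck)=\chi(c)\,S(e)$, so $(1-\chi(c))S(e)=0$ and therefore $S(e)=0$.

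Combining the two evaluations leaves only the term $e=n$, giving $\sum_{k=1}^n f(k-s)\chi(k)=(\mu*f)(n)\chi(s)$, as claimed. I expect the genuine obstacle to be the vanishing statement $S(e)=0$ for proper $e$: this is precisely where primitivity must enter, and it relies on the characterization of the conductor as the smallest modulus on which $\chi$ becomes trivial on the relevant congruence subgroup. As a shortcut, one may alternatively observe that the assertion is the special case $d=n$ of Theorem \ref{Theorem_main_char}, since a primitive character has conductor $d=n$, whence $\chi^*=\chi$, the divisor $\delta$ ranges only over $\delta=1$ with $(\delta,s)=1$ automatic, and the stated sum reduces to $\varphi(n)\chi(s)(\mu*f)(n)/\varphi(n)=(\mu*f)(n)\chi(s)$.
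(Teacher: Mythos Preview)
Your proposal is correct and follows essentially the same route as the paper: expand $f$ via the divisor representation \eqref{repr_even_func}, interchange summations, and use that the inner sum over $k\equiv s\pmod e$ vanishes for every proper divisor $e\mid n$ (this is exactly Lemma~\ref{Lemma_char_sum}, which you re-derive inline), leaving only the term $e=n$; you also note, as the paper does, that the result is the special case $d=n$ of Theorem~\ref{Theorem_main_char}.
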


The above results can be applied to other special even functions (mod $n$), as well. For example, we have

\begin{cor} \label{Cor_prim_char_special} Let $\chi$ be a primitive Dirichlet character (mod $n$), where $n\in \N$. Let $F$ be an arbitrary
arithmetic function and let $s\in \Z$. Then
\begin{equation} \label{sum_F}
\sum_{k=1}^n F((k-s,n)) \chi(k) = (\mu*F)(n) \chi(s).
\end{equation}

In particular,
\begin{equation*}
\sum_{k=1}^n \sigma((k-s,n)) \chi(k) = n \chi(s),
\end{equation*}
where $\sigma(n)$ is the sum-of-divisors function, and
\begin{equation*}
\sum_{k=1}^n \tau((k-s,n)) \chi(k) = \chi(s).
\end{equation*}
\end{cor}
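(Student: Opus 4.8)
The plan is to obtain this corollary as a direct specialization of Theorem \ref{Th_prim_char}. As recalled in the introduction, for any arithmetic function $F$ the map $f(k) = F((k,n))$ is an even function (mod $n$); taking this $f$ we have $f(k-s) = F((k-s,n))$, so the left hand side of \eqref{sum_F} is precisely $\sum_{k=1}^n f(k-s)\chi(k)$. Theorem \ref{Th_prim_char} then gives that this equals $(\mu*f)(n)\chi(s)$, and it remains only to rewrite $(\mu*f)(n)$ in terms of $F$.

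The single computation needed is the observation that $(\mu*f)(n) = (\mu*F)(n)$. I would expand the Dirichlet convolution over the divisors $e \mid n$ and use that $(e,n) = e$ for such $e$, so that $f(e) = F((e,n)) = F(e)$; the two convolutions therefore agree at the argument $n$, namely $(\mu*f)(n) = \sum_{e\mid n}\mu(n/e)f(e) = \sum_{e\mid n}\mu(n/e)F(e) = (\mu*F)(n)$. This establishes \eqref{sum_F}.

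For the two displayed special cases I would evaluate $\mu*\sigma$ and $\mu*\tau$ via the standard factorizations $\sigma = \id*\1$ and $\tau = \1*\1$ together with $\mu*\1 = \varepsilon$, the identity for Dirichlet convolution. Associativity then yields $\mu*\sigma = (\mu*\1)*\id = \id$ and $\mu*\tau = (\mu*\1)*\1 = \1$, whence $(\mu*\sigma)(n) = n$ and $(\mu*\tau)(n) = 1$, giving the two stated identities. I anticipate no real obstacle: the corollary is a clean application of the preceding theorem, the only point of substance being the elementary remark that $(\mu*f)(n)$ depends on $F$ only through its values at the divisors of $n$.
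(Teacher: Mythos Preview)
Your proposal is correct and follows exactly the paper's approach: apply Theorem \ref{Th_prim_char} with $f(k)=F((k,n))$, then specialize to $F=\sigma$ and $F=\tau$. You have simply spelled out the details (the identification $(\mu*f)(n)=(\mu*F)(n)$ and the standard convolution identities $\mu*\sigma=\id$, $\mu*\tau=\1$) that the paper leaves implicit.
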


It turns out that if $F$ is a multiplicative function and $s=1$, then the sum \eqref{sum_F}
is also multiplicative in $n$.

The sums in Theorem \ref{Th_prim_char} and Corollary \ref{Cor_prim_char_special} vanish provided that $(s,n)>1$.

\section{Proofs}

We need the following known results. For the first one see, e.g., \cite[Th.\ 9.2]{MonVau2007}.

\begin{lemma} \label{Lemma_char}
Let $\chi$ be a  Dirichlet character (mod $n$) with conductor $d$. Then there is a unique primitive character
$\chi^*$ (mod $d$) that induces $\chi$. That is,
\begin{equation*}
\chi(k) =  \begin{cases} \chi^*(k), & \text{ if $(k,n)=1$}, \\ 0, & \text{ if $(k,n)>1$}.
\end{cases}
\end{equation*}
\end{lemma}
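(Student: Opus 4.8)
The plan is to construct the inducing character explicitly and then read off primitivity and uniqueness from the minimality built into the conductor. Everything rests on one lifting lemma: if $d\mid n$ and $(a,d)=1$, then there exists $b\in\Z$ with $b\equiv a\pmod d$ and $(b,n)=1$. I would prove this by the Chinese remainder theorem, prescribing the residue of $b$ separately at each prime $p\mid n$: for $p\mid d$ coprimality of $a$ and $d$ already forces $p\nmid b$, while for the primes $p\mid n$ with $p\nmid d$ I simply require $b\not\equiv 0\pmod p$, which is compatible with $b\equiv a\pmod d$ because those moduli are coprime to $d$.

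Next I would make sure the conductor is well defined as a minimum. Call a divisor $e\mid n$ \emph{admissible} if $\chi(a)=\chi(b)$ whenever $(a,n)=(b,n)=1$ and $a\equiv b\pmod e$; the conductor $d$ is by definition the least admissible divisor. To see that a least one exists I would show the admissible divisors are closed under gcd. Given admissible $e_1,e_2$ and units $a,b$ with $a\equiv b\pmod{(e_1,e_2)}$, the system $c\equiv a\pmod{e_1}$, $c\equiv b\pmod{e_2}$ is solvable precisely because $a\equiv b\pmod{(e_1,e_2)}$; since $(a,n)=(b,n)=1$ the solution satisfies $(c,\lcm(e_1,e_2))=1$, and adjusting $c$ at the primes dividing $n$ but not $\lcm(e_1,e_2)$ exactly as in the lifting lemma lets me take $(c,n)=1$. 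Then $\chi(a)=\chi(c)=\chi(b)$, so $(e_1,e_2)$ is admissible, and hence the gcd of all admissible divisors is itself admissible and equals $d$.

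With $d$ in hand I would define $\chi^*$ on $(\Z/d\Z)^\times$ by $\chi^*(a)=\chi(b)$, where $b\equiv a\pmod d$, $(b,n)=1$ is any lift supplied by the lifting lemma; admissibility of $d$ makes this independent of the lift, multiplicativity of $\chi$ on units (together with the fact that a product of lifts is again a unit) makes $\chi^*$ multiplicative, and extending by zero on the non-units yields a Dirichlet character mod $d$. That $\chi^*$ induces $\chi$ is immediate: for $(k,n)=1$ the integer $k$ is its own lift, so $\chi^*(k)=\chi(k)$, while for $(k,n)>1$ one has $\chi(k)=0$ by definition, matching the stated formula. Primitivity follows from minimality: were $\chi^*$ induced by a character mod $d'$ with $d'\mid d$ and $d'<d$, that character would induce $\chi$ as well, making $d'$ admissible and contradicting the choice of $d$.

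Finally, uniqueness: any primitive character $\chi_1^*$ mod $d$ inducing $\chi$ agrees with $\chi$, hence with $\chi^*$, on every $k$ coprime to $n$; since by the lifting lemma each class in $(\Z/d\Z)^\times$ contains such a $k$, we get $\chi_1^*=\chi^*$ on units, and both vanish elsewhere, so $\chi_1^*=\chi^*$. I expect the main obstacle to be the two foundational steps — the lifting lemma and the closure of admissible divisors under gcd that legitimizes the conductor as a genuine minimum; once these are secured, the construction, the induction property, primitivity, and uniqueness are all routine verifications.
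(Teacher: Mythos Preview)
Your proof is correct and follows the standard textbook route (lifting via CRT, closure of induced moduli under gcd to pin down the conductor, explicit construction of $\chi^*$, then primitivity and uniqueness by minimality). The paper, however, does not prove this lemma at all: it simply cites \cite[Th.\ 9.2]{MonVau2007} and moves on, treating the statement as a known background fact. So there is nothing to compare strategically --- you have supplied a full proof where the author chose to quote the literature; your argument is essentially the one found in Montgomery--Vaughan.
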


For the next result see, e.g., \cite[Th.\ 9.4]{MonVau2007}. However, it is not included in most of other textbooks.
For the sake of completeness we present its (short) proof.

\begin{lemma} \label{Lemma_char_sum}
Let $\chi$ be a primitive character (mod $n$). Then for any $d\mid n$, $d<n$ and any $s\in \Z$,
\begin{equation*}
\sum_{\substack{k=1\\ k\equiv s \text{\rm (mod $d$)} }}^n \chi(k)=0.
\end{equation*}
\end{lemma}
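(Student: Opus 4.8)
The plan is to exploit a twist-invariance of the sum under multiplication by a suitable unit. Write $S = \sum \chi(k)$, where the sum runs over a complete set of residues $k$ mod $n$ lying in the class $s$ mod $d$ (equivalently, over $1\le k\le n$ with $k\equiv s \pmod d$). The key input, which I expect to be the only real obstacle, is the standard translation of primitivity into the language of values of $\chi$: since $\chi$ is primitive (mod $n$) and $d$ is a proper divisor of $n$, there exists an integer $a$ with $a\equiv 1 \pmod d$, $(a,n)=1$ and $\chi(a)\neq 1$. Indeed, if no such $a$ existed, then $\chi$ would take the value $1$ on every unit congruent to $1$ modulo $d$, which forces $\chi$ to be induced by a character (mod $d$) via Lemma \ref{Lemma_char} and the defining minimality of the conductor; this contradicts $d<n$ together with the primitivity of $\chi$.

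Fix such an $a$. Multiplication by $a$ is a bijection of $\Z/n\Z$ because $(a,n)=1$, and it preserves residues modulo $d$ because $a\equiv 1 \pmod d$: if $k\equiv s \pmod d$, then $ak\equiv s \pmod d$. Hence the map $k\mapsto ak \bmod n$ permutes the index set $\{\,k : 1\le k\le n,\ k\equiv s \ (\mathrm{mod}\ d)\,\}$. Reindexing the sum along this permutation and using the complete multiplicativity of $\chi$, I would obtain
\[
S = \sum_{k\equiv s} \chi(k) = \sum_{k\equiv s} \chi(ak) = \chi(a)\sum_{k\equiv s} \chi(k) = \chi(a)\,S.
\]
The terms with $(k,n)>1$ cause no difficulty here: they contribute $0$, and since $(a,n)=1$ multiplication by $a$ preserves the value of $(k,n)$, so the bijection of the index set is unaffected.

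Consequently $(1-\chi(a))\,S=0$, and since $\chi(a)\neq 1$ I conclude $S=0$, as claimed. The entire argument rests on producing the element $a$; once primitivity is converted into the existence of a nontrivial value of $\chi$ on the subgroup of units congruent to $1$ modulo $d$, the remainder is a one-line orbit-averaging computation.
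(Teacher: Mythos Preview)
Your proof is correct and follows essentially the same approach as the paper: produce a unit $a\equiv 1\pmod d$ with $\chi(a)\ne 1$ (the paper calls it $c$), observe that multiplication by $a$ permutes the residues congruent to $s$ modulo $d$, and conclude $S=\chi(a)S$, hence $S=0$. The only cosmetic difference is that the paper parameterizes the sum by $t$ via $k=s+td$ before applying the multiplication-by-$c$ bijection, whereas you apply it directly to the index set.
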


\begin{proof}[Proof of Lemma {\rm \ref{Lemma_char_sum}}] Since $\chi$ is a primitive character, for a given
$d\mid n$, $d<n$ there exists $c\in \Z$ such that $(c,n)=1$, $c\equiv 1$ (mod $d$) and $\chi(c)\ne 1$. We have
\begin{equation*}
S:=\sum_{\substack{k=1\\ k\equiv s \text{\rm (mod $d$)} }}^n \chi(k)= \sum_{t \text{ (mod $n/d$)}} \chi(s+td).
\end{equation*}

Here, since $(c,n)=1$, as $t$ runs through a complete residue system (mod $n/d$), the numbers $j=cs+tcd$ run through a
complete residue system (mod $n$), where $j\equiv cs\equiv s$ (mod $d$). Hence,
\begin{equation*}
S = \sum_{t \text{ (mod $n/d$)}} \chi(cs+tcd)= \chi(c)\sum_{t \text{ (mod $n/d$)}} \chi(s+td)=\chi(c)S.
\end{equation*}

Since $\chi(c)\ne 1$, it follows that $S=0$.
\end{proof}

\begin{proof}[Proof of Theorem {\rm \ref{Theorem_Menon_direct}}] If $(k,n)=1$ and $k\equiv r$ (mod $d$), then $(r,d)=(k,d)=1$. Therefore,
the sum is empty in the case $(r,d)>1$.

Now assume that $(r,d)=1$. Since $f$ is an even function (mod $n$),
\begin{equation} \label{repr_even_func}
f(k) = f((k,n)) = \sum_{d\mid (k,n)} (\mu*f)(d).
\end{equation}

We have
\begin{equation*}
T:= \sum_{\substack{k=1\\ (k,n)=1\\ k\equiv r \text{\rm (mod $d$)}
}}^n f(k-s) = \sum_{\substack{k=1\\ k\equiv r \text{\rm (mod $d$)}
}}^n f(k-s) \sum_{\delta \mid (k,n)} \mu(\delta)
\end{equation*}
\begin{equation*}
= \sum_{\delta \mid n} \mu(\delta) \sum_{\substack{k=1\\ \delta \mid k \\ k\equiv r \text{\rm (mod $d$)} }}^n f(k-s)
= \sum_{\delta \mid n} \mu(\delta) \sum_{\substack{j=1 \\ \delta j\equiv r \text{\rm (mod $d$)} }}^{n/\delta} f(\delta j-s).
\end{equation*}

According to \eqref{repr_even_func},
\begin{equation*}
T= \sum_{\delta \mid n} \mu(\delta) \sum_{\substack{j=1 \\ \delta j\equiv r \text{\rm (mod $d$)} }}^{n/\delta} \sum_{e\mid (\delta j-s,n)}
(\mu*f)(e)
\end{equation*}
\begin{equation} \label{last}
= \sum_{\delta \mid n} \mu(\delta) \sum_{e\mid n} (\mu*f)(e) \sum_{\substack{j=1 \\ \delta j\equiv r \text{\rm (mod $d$)}\\
\delta j\equiv s \text{\rm (mod $e$)} }}^{n/\delta} 1.
\end{equation}

Let $\delta,d,e$ be fixed. The linear congruence $\delta j\equiv r$ (mod $d$) has solutions in $j$ if and only if $(\delta,d)\mid r$, equivalent to
$(\delta,d)=1$, since $(r,d)=1$. Similarly, the congruence $\delta j\equiv s$ (mod $e$) has solutions in $j$ if and only if $(\delta,e)\mid s$.
The above two congruences have common solutions in $j$ if and only if $(d,e)\mid r-s$. Furthermore, if $j_1$ and $j_2$ are solutions of these
simultaneous congruences, then $\delta j_1\equiv \delta j_2$ (mod $d$) and $\delta j_1\equiv \delta j_2$ (mod $e$). This gives $j_1\equiv j_2$ (mod $d$),
since $(\delta,d)=1$, and $j_1\equiv j_2$  (mod $e/(\delta,e)$). That is, $j_1\equiv j_2$ (mod $[d,e/(\delta,e)]$), the least common multiple of $d$
and $e/(\delta,e)$. We conclude that there are
\begin{equation*}
N=\frac{n}{\delta[d,e/(\delta,e)]}= \frac{n}{[\delta d,[e,\delta]]}= \frac{n}{[\delta d,e]}
\end{equation*}
solutions (mod $n/\delta$). Therefore, the value of the last sum in \eqref{last} is $N$.

We deduce that
\begin{equation*}
T = \sum_{\substack{\delta \mid n\\ (\delta,d)=1}} \mu(\delta) \sum_{\substack{e\mid n\\ (e,\delta)\mid s\\ (e,d)\mid r-s}}
(\mu*f)(e) \frac{n}{[\delta d,e]}
\end{equation*}
\begin{equation*}
=\frac{n}{d} \sum_{\substack{e\mid n\\ (e,d)\mid r-s}}  \frac{(\mu*f)(e)}{e} (d,e) \sum_{\substack{\delta \mid n\\ (\delta,d)=1\\ (\delta,e)\mid s}}
\frac{\mu(\delta)(\delta,e)}{\delta}.
\end{equation*}

Here the inner sum is
\begin{equation*}
\prod_{\substack{p \mid n\\ p\nmid d\\ (p,e)\mid s }} \left(1-\frac{(p,e)}{p} \right),
\end{equation*}
which equals $(\varphi(n)/n)\left(\varphi(de)/de\right)^{-1}$ in the case $(e,s)=1$ and zero otherwise. We obtain that
\begin{equation*}
T = \varphi(n) \sum_{\substack{e\mid n\\ (e,d)\mid r-s\\ (e,s)=1}}  \frac{(\mu*f)(e)}{\varphi(de)} (d,e)
= \frac{\varphi(n)}{\varphi(d)} \sum_{\substack{e\mid n\\ (e,d)\mid r-s\\ (e,s)=1}}  \frac{(\mu*f)(e)}{\varphi(e)} \varphi((d,e)).
\end{equation*}
\end{proof}

\begin{proof}[Proof of Corollary {\rm \ref{Cor_gen_Menon}}] Apply Theorem \ref{Theorem_Menon_direct}. Let $f(k)=(k,n)$.
Then for every $e\mid n$ we have
\begin{equation*}
(\mu*f)(e)=\sum_{ab=e} \mu(a)(b,n)= \sum_{ab=e} \mu(a)b= \varphi(e).
\end{equation*}
\end{proof}

\begin{proof}[Proof of Corollary {\rm \ref{Cor_gen_Ramanujan}}] Apply Theorem \ref{Theorem_Menon_direct}. Select $f(k)=c_n(k)$ and
use the familiar formula
\begin{equation*}
c_n(k)=\sum_{e\mid (k,n)} e\mu(n/e).
\end{equation*}

It follows that $(\mu*c_n(\DOT))(e)= e\mu(n/e)$ for any $e\mid n$. Also see \cite[Sect.\ 3]{TotHauk2011}.
\end{proof}

\begin{proof}[Proof of Theorem {\rm \ref{Theorem_main_char}}] We have, according to Lemma \ref{Lemma_char},
\begin{equation*}
S_f:= \sum_{k=1}^n f(k-s) \chi(k) = \sum_{\substack{k=1\\ (k,n)=1}}^n f(k-s) \chi^*(k)
\end{equation*}
\begin{equation*}
= \sum_{r=1}^d \sum_{\substack{k=1\\ (k,n)=1\\ k\equiv r \text{\rm (mod $d$)} }}^n f(k-s) \chi^*(k)
= \sum_{r=1}^d \chi^*(r) \sum_{\substack{k=1\\ (k,n)=1\\ k\equiv r \text{\rm (mod $d$)} }}^n f(k-s).
\end{equation*}

Now, by using Theorem \ref{Theorem_Menon_direct},
\begin{equation*}
S_f= \frac{\varphi(n)}{\varphi(d)} \sum_{r=1}^d \chi^*(r) \sum_{\substack{e\mid n\\ (e,s)=1\\ (e,d)\mid r-s}} \frac{(\mu*f)(e)}{\varphi(e)}
\varphi((e,d))
\end{equation*}
\begin{equation*}
= \frac{\varphi(n)}{\varphi(d)} \sum_{\substack{e\mid n\\ (e,s)=1}} \frac{(\mu*f)(e)}{\varphi(e)} \varphi((e,d)) \sum_{\substack{r=1\\
r\equiv s \text{ (mod $(e,d)$)}}}^d \chi^*(r).
\end{equation*}

Here, by Lemma \ref{Lemma_char_sum} the inner sum is zero, unless $(e,d)=d$, that is $d\mid e$, and in this case the inner sum is
$\chi^*(s)$. We deduce that
\begin{equation*}
S_f= \frac{\varphi(n)}{\varphi(d)} \chi^*(s) \sum_{\substack{e\mid
n\\ d\mid e\\(e,s)=1}} \frac{(\mu*f)(e)}{\varphi(e)} \varphi(d) =
\varphi(n) \chi^*(s) \sum_{\substack{\delta \mid n/d\\(\delta,s)=1}}
\frac{(\mu*f)(\delta d)}{\varphi(\delta d)},
\end{equation*}
which vanishes if $(s,d)>1$.
\end{proof}

\begin{proof}[Proof of Corollary {\rm \ref{Cor_main_char_Menon}}] Apply Theorem \ref{Theorem_main_char} to $f(k)=(k,n)$, where
$(\mu*f)(e)=\varphi(e)$ for every $e\mid n$.
\end{proof}

\begin{proof}[Proof of Corollary {\rm \ref{Cor_main_char_Ramanujan}}] Apply Theorem \ref{Theorem_main_char} by selecting $f(k)=c_k(n)$.
See the proof of Corollary \ref{Cor_gen_Ramanujan}.
\end{proof}

\begin{proof}[Proof of Theorem {\rm \ref{Th_prim_char}}] This is a direct consequence of Theorem \ref{Theorem_main_char}
by taking $d=n$. A short direct proof is the following: by using \eqref{repr_even_func} and Lemma \ref{Lemma_char_sum} we have
\begin{equation*}
\sum_{k=1}^n f(k-s)\chi(k)= \sum_{k=1}^n \chi(k) \sum_{e\mid (k-s,n)} (\mu*f)(e)
\end{equation*}
\begin{equation*}
=\sum_{e\mid n} (\mu*f)(e) \sum_{\substack{k=1\\ k\equiv s \text{\rm (mod $e$)}}}^n  \chi(k) =  (\mu*f)(n)\chi(s).
\end{equation*}
\end{proof}

\begin{proof}[Proof of Corollary {\rm \ref{Cor_prim_char_special}}] Use Theorem \ref{Th_prim_char}. Select $f(k)=F((k,n))$ and then $F=\sigma$ and
$F=\tau$, respectively.
\end{proof}

\end{document}